\newtheorem{thm}{Theorem}[section]
\newtheorem{cor}[thm]{Corollary}
\newtheorem{prp}[thm]{Proposition}
\theoremstyle{definition}
\newcommand{\scr}[1]{\mathscr #1}
\definecolor{wco}{rgb}{0.5,0.2,0.3}
\numberwithin{equation}{section} \theoremstyle{remark}
\newcommand{\ua}{\uparrow}
\title{{\bf Hypercontractivity for Functional Stochastic Differential  Equations}\footnote{Supported in
 part by  Lab. Math. Com. Sys.,  NNSFC(11131003, 11431014) and the 985 project.}
}
\author{
{\bf  Jianhai Bao$^{b)}$,  Feng-Yu Wang$^{a),c)}$,  Chenggui Yuan$^{c)}$}\\
\footnotesize{$^{a)}$School of Mathematical Sciences, Beijing Normal
University, Beijing 100875, China}\\
\footnotesize{$^{b)}$School of Mathematics and Statistics, Central South
University, Changsha 410083, China}\\
 \footnotesize{$^{c)}$Department of Mathematics,
Swansea University, Singleton Park, SA2 8PP, UK}\\
\footnotesize{jianhaibao13@gmail.com, wangfy@bnu.edu.cn, F.-Y.Wang@swansea.ac.uk,
C.Yuan@swansea.ac.uk}}
\begin{document}
\def\R{\mathbb R}  \def\ff{\frac} \def\ss{\sqrt} \def\B{\mathbf
B}
\def\N{\mathbb N} \def\kk{\kappa} \def\m{{\bf m}}
\def\dd{\delta} \def\DD{\Dd} \def\vv{\varepsilon} \def\rr{\rho}
\def\<{\langle} \def\>{\rangle} \def\GG{\Gamma} \def\gg{\gamma}
  \def\nn{\nabla} \def\pp{\partial} \def\EE{\scr E}
\def\d{\text{\rm{d}}} \def\bb{\beta} \def\aa{\alpha} \def\D{\scr D}
  \def\si{\sigma} \def\ess{\text{\rm{ess}}}
\def\beg{\begin} \def\beq{\begin{equation}}  \def\F{\scr F}
\def\Ric{\text{\rm{Ric}}} \def\Hess{\text{\rm{Hess}}}
\def\e{\text{\rm{e}}} \def\ua{\underline a} \def\OO{\Omega}  \def\oo{\omega}
 \def\tt{\tilde} \def\Ric{\text{\rm{Ric}}}
\def\cut{\text{\rm{cut}}} \def\P{\mathbb P} \def\ifn{I_n(f^{\bigotimes n})}
\def\C{\scr C}      \def\aaa{\mathbf{r}}     \def\r{r}
\def\gap{\text{\rm{gap}}} \def\prr{\pi_{{\bf m},\varrho}}  \def\r{\mathbf r}
\def\Z{\mathbb Z} \def\vrr{\varrho} \def\l{\lambda}
\def\L{\scr L}\def\Tt{\tt} \def\TT{\tt}\def\II{\mathbb I}
\def\i{{\rm in}}\def\Sect{{\rm Sect}}\def\E{\mathbb E} \def\H{\mathbb H}
\def\M{\scr M}\def\Q{\mathbb Q} \def\texto{\text{o}} \def\LL{\Lambda}
\def\Rank{{\rm Rank}} \def\B{\scr B} \def\i{{\rm i}} \def\HR{\hat{\R}^d}
\def\to{\rightarrow}\def\l{\ell}\def\ll{\lambda}
\def\8{\infty}\def\ee{\epsilon}

\def\8{\infty}\def\ee{\epsilon} \def\Y{\mathbb{Y}} \def\lf{\lfloor}
\def\rf{\rfloor}\def\3{\triangle}\def\H{\mathbb{H}}\def\S{\mathbb{S}}\def\1{\lesssim}
\def\va{\varphi}

\maketitle

\begin{abstract}

An explicit sufficient condition on the hypercontractivity is derived for the Markov semigroup  associated to a   class of functional stochastic differential equations.  Consequently, the semigroup $P_t$ converges exponentially to its unique invariant probability measure $\mu$ in entropy,  $L^2(\mu)$ and the totally variational norm, and it is compact in $L^2(\mu)$ for large $t>0$. This   provides a natural class of  non-symmetric Markov semigroups which are compact for large time but non-compact for small time. A semi-linear model which may not satisfy this sufficient condition is also investigated. As  the associated Dirichlet form does not satisfy the log-Sobolev inequality, the standard argument  using functional inequalities does not work.

 \

\noindent
 {\bf AMS subject Classification:}\    65G17, 65G60    \\
\noindent {\bf Keywords:} Hypercontractivity, compactness, exponential ergodicity,   functional stochastic differential
equation, Harnack inequality.
 \end{abstract}

\section{Introduction}

The hypercontractivity, first found by Nelson \cite{Nelson} for the
Ornstein-Ulenbeck semigroup, has been investigated intensively for
various models of Markov semigroups, see   for instance \cite{Bakry,
Davies, Gross, Wbook1, Wbook2, Wbook3} and references within.
However, so far there is no any result  on this property for the
semigroup associated to functional stochastic differential equations
(FSDEs, or SDEs with memory).

It is well known by Gross (see \cite{Gross}) that the log-Sobolev inequality implies the hypercontractivity. However, for SDEs with delay the log-Sobolev inequality for the associated Dirichlet form does not hold.   Indeed, according to \cite[Theorem 3.3.6]{Wbook1},  the super Poincar\'e inequality (and hence the log-Sobolev inequality) implies the uniform integrability of the associated Markov semigroup $P_t$ for all $t>0$, which is not the  case for the Markov semigroup associated to SDEs with delay, since it is clear that in this case $P_t$ is not uniformly integrable for $t$ smaller than the length of time delay, see Remark 1.1(2) for details.

On the other hand,
  the dimension-free Harnack inequality introduced in
\cite{W97} and further developed in many other papers is a powerful
tool in the study of the hypercontractivity, which works well even
for non-linear SPDEs (see e.g. \cite{W07,L}). Recently,
this type Harnack inequalities   have been investigated in
\cite{WY} for FSDEs. To derive the hypercontractivity and
exponential ergodicity from  the dimension-free Harnack inequality, the key
point is to prove the Gauss-type  concentration property of the
unique invariant probability measure with respect to the uniform
norm on the state space, which is, however,
not easy   for FSDEs. We will see that our proof of  the
exponential integrability is tricky   (see the proof of Lemma
\ref{L2.1}).

\

Let $r_0>0$ be fixed, and let $\C:=C([-r_0,0];\R^d)$ be equipped
with the uniform norm $\|\cdot\|_\infty$. Let $\B_b(\C)$ be the set
of all bounded measurable functions on $\C$. Let $\{B(t)\}_{t\ge0}$
be a $d$-dimensional Brownian motion defined on $(\OO,\F,\{\F_t\}_{t\ge 0}, \P)$, a complete filtered
probability space. Let $\si$ be an
invertible $d\times d$-matrix, $Z\in C(\R^d;\R^d)$ and $b: \C\to
\R^d$ be Lipschitz continuous. Consider the following
FSDE on $\R^d$: \beq\label{1.1} \d X(t)= \big\{Z(X(t))+b(X_t)\big\}\d
t +\si\d B(t),\ \ X_0=\xi\in \C,\end{equation} where, for each $t\ge
0$, $X_t\in \C$ is fixed by $X_t(\theta):=X(t+\theta), \theta\in
[-r_0, 0].$ Assume that \beq\label{A1}
2\<Z(\xi(0))+b(\xi)-Z(\eta(0))-b(\eta), \xi(0)-\eta(0)\>\le \ll_2
\|\xi-\eta\|_\infty^2-\ll_1|\xi(0)-\eta(0)|^2,\ \
\xi,\eta\in\C\end{equation} holds for some constants
$\ll_1,\ll_2\ge 0.$  Then the equation (\ref{1.1}) has a unique strong
solution and the solution is non-explosive, see \cite[Theorem
2.3]{SR}. Let $P_t$ be the Markov semigroup associated to the
segment (functional) solution, i.e.
$$P_t f(\xi):= \E f(X_t^\xi),\ \ t\ge 0, f\in \B_b(\C), \xi\in \C,$$ where $X_t^\xi$ is the corresponding segment process of $X^\xi(t)$ which solves \eqref{1.1} for $X_0=\xi$.
The following is the first main result of the paper.

\beg{thm}\label{T1.1} If $\ll:= \sup_{s\in [0,\ll_1]} \big(s-\ll_2\e^{r_0s}\big)>0$, then $P_t$ has a unique invariant probability measure $\mu$, and the following assertions hold.
\beg{enumerate}\item[$(1)$] $P_t$ is hypercontractive, i.e. $\|P_t\|_{2\to 4}\le 1$ holds for large enough $t>0$, where $\|\cdot\|_{2\to 4}$ is the operator norm from $L^2(\mu)$ to $L^4(\mu)$.
\item[$(2)$] $P_t$ is compact on $L^2(\mu)$ for large enough $t>0$, and there exist constants $c,\aa>0$ such that 
$$\mu((P_tf)\log P_t f)\le c\e^{-\aa t}\mu(f\log f),\ \ \ t\ge 0, f\ge 0, \mu(f)=1.$$
\item[$(3)$] There exists a constant $C>0$ such that
$$\|P_t-\mu\|_2^2:= \sup_{\mu(f^2)\le 1}\mu\big((P_t f-\mu(f))^2\big)\le C\e^{-\ll t},\ \ t\ge 0.$$
\item[$(4)$] There exist two constants $t_0, C>0$ such that
$$\|P_t^\xi-P_t^\eta\|_{var}^2\le  C\|\xi-\eta\|_\infty^2 \e^{-\ll t},\ \ t\ge t_0,$$ where $\|\cdot\|_{var}$ is the total variational norm and $P_t^\xi$ stands for  the distribution of $X_t^\xi$ for   $(t,\xi)\in [0,\infty)\times\C$. \end{enumerate}\end{thm}

\paragraph{Remark 1.1} (1) It is easy to see that an invariant probability measure $\mu$ of $P_t$ is shift-invariant, that is, letting
$\phi_\theta(\xi)=\xi(\theta), \theta\in [-r_0,0],$ we have
$$\mu_\theta:= \mu\circ\phi_\theta^{-1}= \mu_0,\ \ \theta\in [-r_0,0].$$ In fact, letting $X_0$ have law $\mu$,  then $X_{-\theta}$ has law $\mu$ as well for any $\theta\in [-r_0,0]$, so that $X_0(\theta)$ has the same distribution as $X_{-\theta}(\theta)=X_0(0)$; that is, $\mu_\theta=\mu_0.$
Moreover, since the equation is non-degenerate, for any $t>0$, the distribution of $X(t)$ has a strictly positive density with respect to the Lebesgue measure. So, $\mu_\theta (\d x)=\rr(x)\d x$ holds for some measurable function $\rr>0$ on $\R^d$ and all $\theta\in [-r_0,0].$

(2) It is well known that when $P_t$ is symmetric in $L^2(\mu)$, the
$L^2$-compactness of $P_t$ for some $t>0$ implies that for all
$t>0.$ This assertion is wrong in the non-symmetric setting. It is
easy to see that in the present framework $P_t$ is not uniformly integrable  (hence, non-compact) on
$L^2(\mu)$ for $t\in [0,r_0]$, since  according to (1)  $\mu_{-r_0}=\mu_{t-r_0}$ has full support on $\R^d$,  and it is obvious that
$$P_t f(\xi)= \E f(X^\xi_t)=  g(\xi(t-r_0)),\ \ \xi\in\C, t\in (0, r_0]  $$ holds for $f(\xi):= g(\xi(-r_0)), g\in \B_b(\R^d).$  Therefore, Theorem \ref{T1.1}
provides a class of   Markov  semigroups which are compact for large
$t$ but not uniformly integrable (hence, non-compact) for small $t\in (0,r_0]$. Moreover, when $r_0=0$, assertions
in Theorem \ref{T1.1} reduce back to the corresponding well known
ones for SDEs without memory.

\

In applications, the following consequence of Theorem \ref{T1.1} is more convenient to use.

\beg{cor}\label{C1.2} Let $k_1,k_2>0$ be two constants such that
\beq \label{C01}  \<Z(x)-Z(y), x-y\>\le -k_1 |x-y|^2,\ \ x,y\in
\R^d,\end{equation} \beq\label{C02}  |b(\xi)-b(\eta)|\le k_2
\|\xi-\eta\|_\infty, \ \ \xi,\eta\in\C. \end{equation} If
\beq\label{C03} k_2^2 \le \ff{2(\ss{k_1^2r_0^2+1}-1)}{r_0^2}
\exp\Big[\ss{k_1^2r_0^2+1}-1-k_1r_0\Big],\end{equation} then
assertions in Theorem $\ref{T1.1}$ hold for
$$\ll:= \ff{r_0}{k_1r_0-1+\ss{k_1^2r_0^2+1}}\bigg(\ff{2(\ss{k_1^2r_0^2+1}-1)}{r_0^2} -k_2^2 \exp\Big[1+k_1r_0-\ss{k_1^2r_0^2+1}\Big]\bigg)>0.$$
\end{cor}

 \

 Next, we consider a semi-linear model which may not satisfy conditions in Theorem \ref{T1.1} and Corollary
 \ref{C1.2}. Let $\R^d\otimes\R^d$ be the set of all real $d\times d$-matrices, and let   $\nu$ be  a $\R^d\otimes \R^d$-valued finite signed measure on
$[-r_0,0]$; that is, $\nu=(\nu_{ij})_{1\le i,j\le d},$ where every $\nu_{ij}$ is a finite signed measure on $[-r_0,0].$ Consider the following semi-linear FSDE
\beq\label{3.1} \d X(t)=
\Big\{\int_{-r_0}^0\nu(\d\theta)X(t+\theta)+b(X_t)\Big\}\d t + \si
\d B(t),
\end{equation}
where  $\si, B(t)$ are   as in \eqref{1.1}, and $b$
satisfies \eqref{C02}. Let
$$\ll_0:= \sup\bigg\{{\rm Re} (\ll):\ \ll \in \mathbb C, {\rm det}\bigg(\ll I_{d\times d}-\int_{-r_0}^0 \e^{\ll s}\nu(\d s)\bigg)=0\bigg\},$$
where $I_{d\times d}\in\R^d\otimes\R^d$ is the unitary matrix.

In particular, when  $\nu=A\dd_0$, where $A\in\R^d\otimes\R^d$ and
  $\dd_0$ is the Dirac measure at point $0$,    equation \eqref{3.1} reduces to the usual semi-linear FSDE:
$$ \d X(t)=
\big\{AX(t)+b(X_t)\big\}\d t + \si
\d B(t), $$ and $\ll_0$ is the largest real part of eigenvalues of $A$.

Let
$\GG(0)=I_{d\times d}, \GG(\theta)=0_{d\times d}$ for $\theta \in
[-r_0,0)$, and $\{\GG(t)\}_{t\ge 0}$ solve the following   equation on $\R^d\otimes\R^d$:
\beq\label{GG} \d \GG(t)= \bigg(\int_{-r_0}^0 \nu(\d\theta)\GG(t+\theta)\bigg)\d t.\end{equation}
According to \cite[Theorem 3.1]{RR}, the unique strong solution
$\{X^\xi(t)\}_{t\ge0}$ of \eqref{3.1}   can be represented by
\begin{equation}\label{a7}
\begin{split}
X^\xi(t)&=\GG(t)\xi(0)+\int_{-r_0}^0\nu(\d \theta)\int_{-r_0}^\theta
\GG(t+\theta-s)\xi(s)\d s\\
&\quad+\int_0^t\GG(t-s)b(X_s^\xi)\d s+\int_0^t \GG(t-s)\si\d B(s).
\end{split}
\end{equation}
In what follows, we assume $\ll_0<0.$ By \cite[Theorem 3.2,
p.271]{HV}, for any $k\in(0,-\ll_0)$, there exists a constant $c_k>0$ such that
\begin{equation}\label{b1}
\|\GG(t)\|\le c_k\e^{-k t},\ \ \ t\ge-r_0,
\end{equation}
where $\|\cdot\|$ denotes the operator norm of a matrix. Obviously, the optimal constant $c_k$ is increasing in $k\in (0, -\ll_0)$.
If, in particular, $\nu=A\dd_0$ for a symmetric $d\times d$-matrix $A$, \eqref{b1} holds for $c_k=1$ and $k\in (0,-\ll_0].$
In general, see Proposition \ref{PP} in the Appendix of the paper for an explicit estimate on $c_k.$

The second main result in this paper is stated as follows.

 \beg{thm}\label{T1.3} Let $P_t$ be the Markov semigroup associated to the equation $\eqref{3.1}$ such that $\nu$ satisfies $\ll_0<0$ and $b$ satisfies $\eqref{C02}$.  If $\ll:=\sup_{k\in(0,-\ll_0)}(k-c_k k_2\e^{kr_0})>0$, where $c_k$ is in $\eqref{b1}$,  then assertions in Theorem $\ref{T1.1}$ hold.  \end{thm}

The following corollary follows immediately from Theorem \ref{T1.3} since when $b=0$ we have $k_2=0$, and $c_k=1$ when $\nu= A\dd_0$ for some symmetric matrix $A$.

\begin{cor}\label{C1.4} In the situation of Theorem $\ref{T1.3}$.\beg{enumerate} \item[$(1)$]
If   $b\equiv0$,  then assertions in Theorem
$\ref{T1.1}$ hold for all $\ll\in(0,-\ll_0).$
\item[$(2)$] Let $\nu =A\dd_0$ for some  symmetric $d\times d$-matrix $A$ with largest eigenvalue $\ll_0<0.$ If $\ll:=\sup_{k\in(0,-\ll_0]}(k-  k_2\e^{kr_0})>0$,  then assertions in Theorem
$\ref{T1.1}$ hold. \end{enumerate}
\end{cor}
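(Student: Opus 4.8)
The plan is to obtain both assertions by specializing, in Theorem~\ref{T1.3}, the rate
$$\ll=\sup_{k\in(0,-\ll_0)}\big(k-c_k k_2\e^{kr_0}\big),$$
exploiting the two structural simplifications noted just before the corollary: $k_2=0$ when $b\equiv0$, and $c_k\equiv1$ when $\nu=A\dd_0$ with $A$ symmetric. Neither part requires any new estimate; the whole content is a correct bookkeeping of these two facts together with one short identification of $\GG$.

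For part $(1)$, since $b\equiv0$ satisfies \eqref{C02} with $k_2=0$, the penalty term $c_k k_2\e^{kr_0}$ vanishes identically and the bracket reduces to $k$ for every $k\in(0,-\ll_0)$. First I would fix an arbitrary $\ll\in(0,-\ll_0)$ and take the admissible value $k=\ll$; this single choice already yields bracket value $\ll>0$, so the supremum defining the rate in Theorem~\ref{T1.3} is positive and, reading the proof of that theorem with this $k$, all four assertions of Theorem~\ref{T1.1} hold with exponent $\ll$. Since $\ll$ was an arbitrary element of $(0,-\ll_0)$, the conclusion holds for every such $\ll$, which is exactly the claim.

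For part $(2)$, the first step is to identify $\GG$. With $\nu=A\dd_0$, equation \eqref{GG} becomes $\d\GG(t)=A\GG(t)\,\d t$ with $\GG(0)=I_{d\times d}$, whence $\GG(t)=\e^{At}$ for $t\ge0$. For symmetric $A$ with largest eigenvalue $\ll_0$ one has $\|\e^{At}\|=\e^{\ll_0 t}=\e^{-(-\ll_0)t}$, so that $\|\GG(t)\|\le\e^{-kt}$ for every $k\le-\ll_0$; that is, \eqref{b1} holds with $c_k=1$ for all $k\in(0,-\ll_0]$, including the endpoint. Substituting $c_k=1$ into the rate of Theorem~\ref{T1.3} turns it into $\sup_{k\in(0,-\ll_0]}(k-k_2\e^{kr_0})$, and positivity of this quantity delivers the conclusion.

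The only point that calls for genuine care — and thus the main, though minor, obstacle — is the passage in part $(2)$ from the open interval $(0,-\ll_0)$ used in Theorem~\ref{T1.3} to the closed interval $(0,-\ll_0]$ appearing in the corollary. In the general setting the optimal constant $c_k$ is increasing in $k$ and typically diverges as $k\uparrow-\ll_0$, which forces the open interval; in the present symmetric case, however, the uniform bound $c_k=1$ persists at the endpoint $k=-\ll_0$, so the decay estimate \eqref{b1} driving the proof of Theorem~\ref{T1.3} remains valid there and the supremum may legitimately be taken over $(0,-\ll_0]$. Everything else is a direct substitution into Theorem~\ref{T1.3}.
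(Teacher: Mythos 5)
Your proposal is correct and follows essentially the same route as the paper, which derives the corollary directly from Theorem \ref{T1.3} using precisely the two observations you make: $k_2=0$ when $b\equiv0$, and $c_k=1$ (valid up to and including $k=-\ll_0$, since $\|\e^{At}\|=\e^{\ll_0 t}$ for symmetric $A$) when $\nu=A\dd_0$. Your explicit attention to why the endpoint $k=-\ll_0$ is admissible in part $(2)$ is a welcome clarification of a point the paper states without comment.
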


 \

To conclude this section, let us compare Theorems \ref{T1.1} and \ref{T1.3}. The framework of Theorem \ref{T1.1} is more general by the generality of
 $Z$. On the other hand, the following example shows that Theorem \ref{T1.3} is not covered by Corollary \ref{C1.2}, the comparable consequence of
 Theorem \ref{T1.1}. Let $r_0=1$,
$\nu(\cdot):=-I_{d\times d}\e^{-1} \dd_{-1}$, and
 $b\equiv0$. Then,   $$\ll_0=\sup\big\{{\rm Re}(\ll):\ \ll\in \mathbb C, \ll+\e^{-\ll-1}=0\big\}=-1<0,$$ so that  Corollary \ref{C1.4} applies to all $\ll\in (0,-1)$; but Corollary  \ref{C1.2} does not apply since $Z=0.$

 \

The next section is devoted to the proofs of Theorem \ref{T1.1} and
Corollary \ref{C1.2}, while Theorem \ref{T1.3} is proved in Section
\ref{sec3}. Finally, in Appendix we present an estimate on $c_k$ in \eqref{b1}.

\section{Proofs of Theorem \ref{T1.1} and Corollary
\ref{C1.2}}\label{sec2} Since \eqref{A1} remains true for
$\bar\ll_1$ in place of $\ll_1$, where $\bar\ll_1\in (0,\ll_1]$ is
such that $\ll= \bar\ll_1-\ll_2 \e^{r_0\bar\ll_1}>0$, we may and do
assume that $\ll= \ll_1-\ll_2 \e^{r_0\ll_1}$.

\beg{lem}  \label{L2.1} If $\ll>0$, then there exist two constants
$c,\vv>0$ such that
$$\E\, \e^{\vv \|X_t^\xi\|_\infty^2} \le \e^{c(1+\|\xi\|_\infty^2)},\ \ t\ge 0, \xi\in \C.$$\end{lem}

\beg{proof} Since in our proof we need to assume in advance that $\E\, \e^{\vv \|X_t^\xi\|_\infty^2}<\infty$ for some $\vv>0$ and all $t\ge 0$, we adopt an approximation argument. Let
$$\tau_n:=\inf\{t\ge 0: \|X_t^\xi\|_\infty\ge n\},\ \ n\ge1.$$ Then $\tau_n\uparrow\infty$.
Consider the FSDE
$$\d X^{(n)}(t) = \{Z(X^{(n)}(t)+b(X^{(n)}_t)\}1_{[0,\tau_n]}(t)\d t -\ll_1 X^{(n)}(t)1_{(\tau_n, \infty)}(t)\d t +\si\d B(t),\ \ X^{(n)}_0=\xi.$$
Then the equation has a unique solution such that $X^{(n)}_t= X^\xi_t$ for $t\le \tau_n.$ Therefore, for any $t>0$,
\beq\label{N1} \lim_{n\to\infty} \|X^{(n)}_t- X^\xi_t\|_\infty=0, \ \   \   a.s.  \end{equation}
Obviously, there exists a constant $C(n)>0$ such that
$$\big\<\{Z(X^{(n)}(t)+b(X^{(n)}_t)\}1_{[0,\tau_n]}(t)  -\ll_1 X^{(n)}(t)1_{(\tau_n, \infty)}(t),\, X^{(n)}(t)\big\>\le C(n) -\ll_1|X^{(n)}(t)|^2.$$
Then it is trivial to see that
\beq\label{N2} \E \e^{\vv_0 \|X_t^{(n)}\|_\infty^2}<\infty,\ \ n\ge 1, t\ge 0\end{equation} holds for some constant $\vv_0>0.$

Next, let  $\xi_0(\theta)=0, \theta\in [-r_0,0].$ By \eqref{A1}, we have
\beg{equation*}\beg{split} 2\<Z(\xi(0))+ b(\xi),\xi(0)\> &\le  2\<Z(\xi(0))+b(\xi) -Z(0)-b(\xi_0),\,\xi(0)\> + |Z(0)+b(\xi_0)|\cdot |\xi(0)|\\
&\le c_0+
\ll_2\|\xi\|_\infty^2-\ll_1'|\xi(0)|^2,\ \ \xi\in \C\end{split}\end{equation*}  for some constants $c_0>0$ and $\ll_1'>0$ such that
$\ll':= \ll_1'-\ll_2 \e^{r_0\ll_1'}>0.$ So, by It\^o's formula,
$$ \d |X^{(n)}(t)|^2 \le \big\{c_1+ \ll_2\|X^{(n)}_t\|_\infty^2-\ll_1'|X^{(n)}(t)|^2 \big\}\d t +\d M(t)$$ holds for
$c_1:= c_0+ \|\si\|_{HS}^2$ and $\d M(t):=2  \<\si\d B(t), X^{(n)}(t)\>.$  This implies
$$\e^{\ll_1' t} |X^{(n)}(t)|^2 \le |\xi(0)|^2 +\int_0^t \e^{\ll_1' s} (c_1 + \ll_2 \|X_s^{(n)}\|_\infty^2) \d s + \int_0^t \e^{\ll_1' s} \d M(s).$$
Let $N(t):= \sup_{s\in [0,t]} \int_0^s\e^{\ll_1' r}\d M(r).$ We
obtain
\beg{equation*}\beg{split} \e^{\ll_1' t} \|X^{(n)}_t\|^2_\8 &\le \e^{r_0\ll_1'} \sup_{\theta\in [-r_0,0]} \e^{\ll_1'(t+\theta)} |X^{(n)}(t+\theta)|^2\\
&\le \e^{\ll_1' r_0} \|\xi\|^2_\infty +\int_0^t \e^{\ll_1' (s+r_0)} (c_1 + \ll_2 \|X_s^{(n)}\|_\infty^2)\d s +\e^{\ll_1' r_0}N(t)\\
&\le c_2(1+\|\xi\|_\infty^2) \e^{\ll_1' t} +\e^{\ll_1' r_0}N(t) +
\ll_2 \e^{\ll_1' r_0} \int_0^t\e^{\ll_1' s} \|X_s^{(n)}\|_\infty^2
\d s\end{split}\end{equation*} for some constant $c_2>0.$  By
Gronwall's inequality, one has
\beg{equation*}\beg{split} \e^{\ll_1' t} \|X_t^{(n)}\|_\infty^2 \le & c_2(1+\|\xi\|_\infty^2)\e^{\ll_1't} +\e^{\ll_1' r_0}N(t)\\
 &+ \ll_2 \e^{\ll_1' r_0}\int_0^t\big\{c_2(1+\|\xi\|_\infty^2)\e^{\ll_1's} +\e^{\ll_1' r_0}N(s)\big\}\exp\big[\ll_2\e^{\ll_1' r_0}(t-s)\big]\d s.\end{split}\end{equation*} Recalling that $\ll':= \ll_1' - \ll_2\e^{\ll_1'r_0}>0$, we arrive at
\beg{equation*}\beg{split}\|X_t^{(n)}\|_\infty^2 &\le c_2(1+\|\xi\|_\infty^2) +\e^{\ll_1' (r_0-t)}N(t)\\
&\qquad  +\ll_2 \e^{\ll_1' r_0}\int_0^t \big\{c_2(1+\|\xi\|_\infty^2)  +\e^{\ll_1' (r_0-s)}N(s)\big\}\e^{-\ll'(t-s)}\d s\\
&\le c_3\big(1+\|\xi\|_\infty^2+\e^{-\ll_1' t}N(t)\big) +c_3 \int_0^t \e^{-\ll_1' s-\ll'(t-s)}N(s)\d s\end{split}\end{equation*}   for some constant $c_3>0.$  Therefore,  for any $\vv\in (0,1)$,
\beq\label{*1} \E\, \e^{\vv \|X_{t }^{(n)}\|_\infty^2} \le \e^{c_3(1+\|\xi\|_\infty^2)} \ss{I_1\times I_2}\end{equation} holds for
\beg{equation*}\beg{split}
&I_1:= \E \exp\bigg[ 2 c_3\vv \int_0^{t } \e^{-\ll_1' s-\ll'(t -s)}N(s)\d s\bigg],\\
&I_2:= \E \exp\big[ 2 c_3\vv\e^{-\ll'_1t}N(t
)\big].\end{split}\end{equation*} To finish the proof, below we
estimate $I_1$ and $I_2$ respectively.

(a) Estimate on $I_1$. To avoid the singularity of reference probability measures   discussed below when $t\to 0$, we   extend the integrals to the larger interval $[-r_0,t].$ Letting $N(s)=0$ for $s\le 0$, by Jensen's inequality for the probability measure
$$\nu(\d s):= \ff {\ll'\e^{\ll'r_0}}{\e^{\ll' r_0}-\e^{-\ll' t}} \e^{-\ll'(t-s)}\d s\ \ {\rm on}\  [-r_0,t], $$ we have
\beg{equation*}\beg{split} &\exp\bigg[ 4 c_3\vv \int_0^{t} \e^{-\ll_1' s-\ll'(t -s)}N(s)\d s\bigg]\\
 &= \exp\bigg[ \ff{4 c_3\vv (\e^{\ll' r_0}-\e^{-\ll' t})}{\ll'\e^{\ll'r_0}}\int_{-r_0}^{t} \e^{-\ll_1' s}N(s)\nu(\d s)\bigg]\\
&\le \int_{-r_0}^t \exp\bigg[ \ff{4 c_3\vv (\e^{\ll' r_0}-\e^{-\ll' t})}{\ll'\e^{\ll'r_0}}  \e^{-\ll_1' s}N(s) \bigg]\nu(\d s)\\
&\le \ff{\ll'\e^{\ll'r_0}}{\e^{\ll' r_0}-1} \int_{-r_0}^t \exp\bigg[
\ff{4 c_3\vv }{\ll'}  \e^{-\ll_1' s}N(s) \bigg]\e^{-\ll'(t-s)}\d
s.\end{split}\end{equation*}
So, by Jensen's inequality and the
Burkhold-Davis-Gundy inequality, there exists a constant $c_4>0$
such that
\beg{equation*}\beg{split}I_1^2 &\le \E\, \exp\bigg[ 4 c_3\vv \int_0^{t } \e^{-\ll_1' s-\ll'(t -s)}N(s)\d s\bigg]\\
&\le  \ff{\ll'\e^{\ll'r_0}}{\e^{\ll' r_0}-1} \int_{-r_0}^t \e^{-\ll'(t-s)}\E \exp\bigg[
\ff{4 c_3\vv }{\ll'}  \e^{-\ll_1' s}N(s) \bigg]\d s\\
&\le c_4 \E \int_{-r_0}^t \e^{-\ll'(t-s)} \bigg(\exp\bigg[c_4 \vv^2 \e^{-2\ll_1' s} \int_0^s \e^{2\ll_1' u}\|X_{u }^{(n)}\|_\infty^2\,\d u\bigg]\bigg)^{\ff 1 2}\d s\\
&\le c_4 \E \int_{-r_0}^t \e^{-\ll'(t-s)} \exp\bigg[c_4 \vv^2   \int_{-r_0}^s \e^{-2\ll_1'(s- u)}\|X_{u }^{(n)}\|_\infty^2\,\d u\bigg]\d s,
\end{split}\end{equation*} where we set $X_s^{(n)}=\xi$ for $s\le 0$.

Now, using   Jensen's inequality as above for the probability measure
 $$\ff{2\ll_1'\e^{\ll'_1r_0}}{\e^{2\ll_1'r_0}-\e^{-2\ll_1' s}} \e^{-2\ll_1' (s-u)}\d u\ \ \text{on}\ [-r_0, s],$$
we arrive at
\beg{equation*}\beg{split} I_1^2&\le c_5 \E\int_{-r_0}^t \e^{-\ll'(t-s)}\d s \int_{-r_0}^s \e^{c_5 \vv^2 \|X_{u }^{(n)}\|_\infty^2-2\ll_1'(s-u)}\d u \\
& =c_5 \int_{-r_0}^t \E\,\e^{c_5 \vv^2 \|X_{u }^{(n)}\|_\infty^2}\d u\int_u^t \e^{-\ll'(t-s)- 2\ll_1' (s-u)}\d s \end{split}\end{equation*}
for some constant $c_5>0.$ Since $\ll_1'\ge \ll'>0$, we have
$$-2\ll_1'(s-u)-\ll'(t-s)\le -\ll' (t-u)-\ll_1'(s-u),$$ so that this implies
\beq\label{*2} \beg{split} I_1^2&\le c_5  \int_{-r_0}^t \e^{-\ll' (t-u)}\E\,\e^{c_5 \vv^2 \|X_{u }^{(n)}\|_\infty^2}\d u\int_u^t \e^{- \ll_1' (s-u)}\d s\\
&\le \ff{c_5}{\ll_1'}   \int_{-r_0}^t\e^{-\ll'(t-u)} \E\,\e^{c_5 \vv^2 \|X_{u }^{(n)}\|_\infty^2}\,\d u.\end{split}\end{equation}

(b) Estimate on $I_2$. A shown in (a), by the
Burkhold-Davis-Gundy inequality and using Jensen's inequality for
the probability measure $$\ff{2\ll_1'\e^{\ll'_1r_0}}{\e^{2\ll_1'
r_0}-\e^{-2\ll_1' t}} \e^{-2\ll_1' (t -s)}\d s \ \ \text{on}\ [-r_0, t],$$ we
conclude  that \beg{equation}\label{*3}\beg{split} I_2^2
&\le \E \exp\bigg[ c_6 \vv^2 \e^{-2\ll'_1t} \int_{-r_0}^{t} \e^{2\ll_1' s} \|X_s^{(n)}\|_\infty^2\d s \bigg]\\
&\le c_7 \E \int_{-r_0}^{t} \e^{c_7\vv^2 \|X_s^{(n)}\|_\infty^2}\e^{-2\ll_1' (t-s)}\d s  \end{split}\end{equation} holds for some constants $c_6,c_7>0.$

Now, combining \eqref{*1}, \eqref{*2} with \eqref{*3}, and taking
$\vv = \vv_0\land \ff 1 {c_5\lor c_7}$, we arrive at
\beg{equation*}\beg{split} &\E \e^{\vv\|X^{(n)}_{t }\|_\infty^2} \le \e^{c_8(1+\|\xi\|_\infty^2)} \bigg(\int_{-r_0}^t \big(\E \e^{\vv \|X_{s }^{(n)}\|_\infty^2}\big)\e^{-\ll' (t-s)}\d s\bigg)^{\ff 1 2} \\
&\le \e^{c_9(1+\|\xi\|_\infty^2)} +\ff {\ll'} 2  \int_{-r_0}^t \big(\E \e^{\vv \|X_{s }^{(n)}\|_\infty^2}\big)\e^{-\ll' (t-s)}\d s \end{split}\end{equation*} for some constants $c_8,c_9>0.$ Equivalently,
$$\e^{\ll't} \E \e^{\vv\|X^{(n)}_{t }\|_\infty^2}\le \e^{c_9(1+\|\xi\|_\infty^2)+\ll't}+ \ff {\ll'} 2 \int_{-r_0}^t \big(\E \e^{\vv \|X_{s }^{(n)}\|_\infty^2}\big)\e^{\ll's}\d s.$$ By \eqref{N2} and $\vv\le\vv_0$, we see that
$$\E \e^{\vv\|X^{(n)}_{t}\|_\infty^2}<\infty,\ \ t\ge 0.$$ Then,
 by Gronwall's inequality,
$$\e^{\ll' t} \E \e^{\vv\|X^{(n)}_{t }\|_\infty^2}\le \e^{c_9(1+\|\xi\|_\infty^2)+\ll't}+
\ff {\ll'} 2\int_{-r_0}^t \e^{c_9(1+\|\xi\|_\infty^2)+\ll's + \ff {\ll'} 2 (t-s)}\d s.$$ Therefore,
$$\E \e^{\vv\|X^{(n)}_{t }\|_\infty^2}\le \e^{c_9(1+\|\xi\|_\infty^2)}+\ff {\ll'} 2\int_{-r_0}^t \e^{c_9(1+\|\xi\|_\infty^2)-  \ff {\ll'} 2 (t-s)}\d s
\le \e^{c(1+\|\xi\|_\infty^2)}$$ for some constant $c>0.$  According to \eqref{N1},  the proof is finished by letting $n\to\infty$.
\end{proof}

\beg{lem}\label{L2.2}  For any $t\ge 0$ and $\xi,\eta\in\C$,
$\|X_t^\xi-X^\eta_t\|_\infty^2 \le \|\xi-\eta\|_\infty^2 \e^{\ll_1 r_0 -\ll t}.$\end{lem}

\beg{proof} By It\^o's formula, we have
$$\d |X^\xi(t)-X^\eta(t)|^2\le \big(\ll_2\|X_t^\xi-X_t^\eta\|_\infty^2-\ll_1 |X^\xi(t)-X^\eta(t)|^2\big)\d t.$$ Then
$$\e^{\ll_1 t}|X^\xi(t)-X^\eta(t)|^2 \le |\xi(0)-\eta(0)|^2 +\ll_2 \int_0^t  \e^{\ll_1 s} \|X_s^\xi-X^\eta_s\|_\infty^2\d s.$$ So,
$$\e^{\ll_1 t}\|X^\xi_t-X^\eta_t\|_\infty^2 \le \e^{r_0\ll_1}\|\xi-\eta\|^2_\infty +\ll_2\e^{r_0\ll_1} \int_0^t  \e^{\ll_1 s} \|X_s^\xi-X^\eta_s\|_\infty^2\d s.$$  Therefore, the proof is finished by Gronwall's inequality since we have assumed that $\ll= \ll_1-\ll_2\e^{r_0\ll_1}.$
\end{proof}

Now, we introduce the dimension-free Harnack inequality in the sense of \cite{W97}.  We are referred to \cite{BWY,ES, WY} for more results on the Harnack inequality of FSDEs.
Since results in these papers do not directly imply the following Lemma \ref{L2.3}, we include a simple proof using coupling by change of measure
 introduced in \cite{ATW}.  By \eqref{A1} and the Lipschitz property of $b$, \eqref{C02} holds for some $k_2\ge 0$ and
$$2\<Z(x)-Z(y),x-y\>\le 2\<b(\xi_y)-b(\xi_x),x-y\> +(\ll_2-\ll_1)|x-y|^2\le -k_1|x-y|^2,\ \ x,y\in\R^d$$
holds for some constant $k_1\in\R$ as required in Lemma \ref{L2.3}, where $\xi_x(\theta)=x, \xi_y(\theta)=y$
for $\theta\in [-r_0,0].$

\beg{lem} \label{L2.3}Let $\eqref{C01}$ and $\eqref{C02}$ hold for some constants $k_1\in\R$ and $k_2\ge 0.$
  Then, for any $p>1,\dd>0$, positive $f\in \B_b(\C),$  and $\xi,\eta\in\C$,
\beg{equation*}\beg{split} \big(P_{t +r_0}f(\xi)\big)^p \le &\big(P_{t+r_0} f^p(\eta)\big)\exp\bigg[\ff{p^2\|\si^{-1}\|^2(1+\dd)}{2(p-1)}\Big\{\ff{2k_1 |\xi(0)-\eta(0)|^2}{ \e^{2k_1t}-1 }\\
 &\quad+\ff{k_2^2}\dd\Big(r_0\|\xi-\eta\|_\8^2+\ff{|\xi(0)-\eta(0)|^2(\e^{4k_1t}-1-4k_1t\e^{2k_1t})}{2k_1(\e^{2k_1t}-1)^2}\Big)\Big\}\bigg].\end{split}\end{equation*}
\end{lem}
\beg{proof} Let $X_t=X^\xi_t$ and $Y(t)$ solve the equation
$$\d Y(s)= \bigg(Z(Y(s)) + b(X_s)+ g(s)1_{[0,\tau)}(s) \cdot \ff{X(s)-Y(s)}{|X(s)-Y(s)|}\bigg)\d s +\si\d B(s),\ \ Y_0=\eta,$$ where
$$\tau:=\inf\{s\ge 0: X(s)=Y(s)\}$$ is the coupling time and $g\in C([0,\infty))$  is to be determined. It is easy to see that this equation has a unique solution up to the coupling time $\tau$. Letting $Y(s)=X(s)$ for $s\ge \tau$, we obtain a solution $Y(s)$ for all $s\ge 0.$ We will then choose $g$ such that $\tau\le t$, i.e. $X_{t+r_0}=Y_{t+r_0}.$
Obviously, we have
$$\d |X(s)-Y(s)|\le -\big\{k_1|X(s)-Y(s)| +g(s)\big\}\d s,\ \ s<\tau.$$ Then
$$  |X(s) -Y(s)|\le |\xi(0)-\eta(0)|\e^{-k_1s}  -\e^{-k_1 s}\int_0^{s} \e^{k_1r}g(r)\d r,\ \ s\le\tau.$$ Taking
\beq\label{CD2}  g(s)= \ff{|\xi(0)-\eta(0)| \e^{k_1s}}{\int_0^t\e^{2k_1s}\d s},\ \ \ s\in [0,t],\end{equation}
we see that
\beq\label{CD3} |X(s) -Y(s)| \le  \ff{|\xi(0)-\eta(0)|(\e^{2k_1 t-k_1 s}-\e^{k_1s})}{\e^{2k_1t}-1},\ \ 0\le s\le \tau.\end{equation}
In particular, this implies   $\tau\le t$ and thus,    $X_{t+r_0}=Y_{t+r_0}$ as required.

Now, let $h(s):= \si^{-1}\big\{1_{[0,\tau)} g(s)\ff{X(s)-Y(s)}{|X(s)-Y(s)|} +
b(X_s)-b(Y_s)\big\}.$ We have
$$\d Y(s)= \big(Z(Y(s)) + b(Y_s)\big)\d s +\si\d \tt B(s),\ \ Y_0=\eta,\ \ s\in [0,t+r_0],$$ where, by the Girsanov theorem,
$$\tt B(s):= B(s)+\int_0^s \<h(u), \d u\>,\ \ s\in [0,t+r_0]$$ is a $d$-dimensional Brownian motion  under the weighted probability measure $\d\Q:= R\d\P$ with
 $$R:= \exp\bigg[-\int_0^{t+r_0}\<h(s), \d B(s)\>-\ff 1 2 \int_0^{t+r_0} |h(s)|^2 \d s\bigg].$$ By the weak uniqueness of the equation and
 $X_{t+r_0}=Y_{t+r_0}$, we have
 \beq\label{NW} P_{t+r_0} f(\eta)= \E[R f(Y_{t+r_0})] = \E[R f(X_{t+r_0})].\end{equation}
 Then, by Jensen's inequality,
\beq\label{HH}(P_{t+r_0} f(\eta))^p =   \big(\E[R
f(X_{t+r_0})]\big)^p\le (P_{t+r_0}f^p(\xi)) \big(\E
R^{\ff{p}{p-1}}\big)^{p-1}.\end{equation} Noting  that    \eqref{CD3} implies
$$\|\xi_t-\eta_t\|_\infty^2\le 1_{[0,r_0]}(s) \|\xi-\eta\|_\infty^2 + 1_{(r_0,r_0+t]}(s) \ff{ (\e^{2k_1t-k_1(s-r_0)}-\e^{k_1(s-r_0)})^2|\xi(0)-\eta(0)|^2}
{(\e^{2k_1t}-1)^2},$$
we obtain from  \eqref{C02} and \eqref{CD2} that
\beg{equation*}\beg{split} |h(s)|^2&\le \|\si^{-1}\|^2 \Big(1_{[0,t]}(s) g(s) +k_2 \|X_s-Y_s\|_\infty\Big)^2\\
& \le \|\si^{-1}\|^2 \Big( 1_{[0,t]}(s)  (1+\dd) g(s)^2
+ (1+\dd^{-1}) k_2^2  \|X_s-Y_s\|_\infty^2\Big)\\
&\le 1_{[0,t]}(s)  \ff{4 k_1^2  \e^{2k_1 s}\|\si^{-1}\|^2 (1+\dd)|\xi(0)-\eta(0)|^2}{(\e^{2k_1t}-1)^2}\\
&\quad + 1_{(r_0,r_0+t]}(s) \ff{\e^{2k_1 s}\|\si^{-1}\|^2 (1+\dd)|\xi(0)-\eta(0)|^2k_2^2(\e^{2k_1t-k_1(s-r_0)}-\e^{k_1(s-r_0)})^2 }{\dd (\e^{2k_1t}-1)^2} \\
&\quad +1_{[0, r_0]}(s) \|\si^{-1}\|^2 k_2^2 (1+\dd^{-1}) \|\xi-\eta\|_\infty^2= :\gg(s).\end{split}\end{equation*}
Therefore,
  \beq\label{NW2}
\beg{split} &\E   R^{\ff p{p-1}}  \le
\e^{\ff{p^2}{2(p-1)^2}\int_0^{t+r_0} \gg(s)\d s }
 \E\e^{-\ff{p}{p-1}\int_0^{t+r_0}\<h(s), \d B(s)\>-\ff {p^2}
{2(p-1)^2} \int_0^{t+r_0} |h(s)|^2 \d
 s} \\
 & =\e^{\ff{p^2}{2(p-1)^2}\int_0^{t+r_0} \gg(s)\d s } \\
 &=\exp\bigg[\ff{p^2\|\si^{-1}\|^2(1+\dd)}{2(p-1)^2}\bigg( \ff{(\e^{4k_1 t}-1-4k_1t \e^{2k_1t})k_2^2|\xi(0)-\eta(0)|^2}{2k_1\dd(\e^{2k_1t}-1)^2}\\
 &\qquad\qquad\qquad\qquad\qquad\qquad + \ff{r_0 k_2^2\|\xi-\eta\|_\infty^2}\dd +  \ff{2k_1|\xi(0)-\eta(0)|^2}{\e^{2k_1t}-1}\bigg)\bigg]. \end{split}\end{equation}
Combining this with \eqref{HH}, we finish the proof.
 \end{proof}

\beg{lem}\label{L2.4} If $\ll>0$, then $P_t$ has a unique invariant
probability measure $\mu$ such that
$$\lim_{t\to\infty} P_t f(\xi)= \mu(f):=\int_\C f\d\mu,\ \ f\in C_b(\C), \xi\in\C.$$\end{lem}

\beg{proof} Let $\scr P(\C)$ be the set of all probability measures on $\C$. Let $W$ be the $L^2$-Wasserstein distance on $\scr P(\C)$ induced by the distance $\rr(\xi,\eta):= 1\land \|\xi-\eta\|_\infty;$ that is,
$$W(\mu_1,\mu_2):= \inf_{\pi\in \C(\mu_1,\mu_2)} \big(\pi(\rr^2)\big)^{\ff 1 2},\ \ \mu_1,\mu_2\in \scr P(\C),$$
where $\C(\mu_1,\mu_2)$ is the set of all couplings of $\mu_1$ and $\mu_2.$ It is well known that $\scr P(\C)$ is a complete metric space with respect to the distance $W$, and the convergence in $W$ is equivalent to the weak convergence, see e.g. \cite[Theorems 5.4 and 5.6]{Chen}. Let $P_t^\xi$ be the distribution of $X_t^\xi$. Then it remains to prove the following two assertions.
\beg{enumerate} \item[(i)] For any $\xi\in \C$, there exists $\mu_\xi\in \scr P(\C)$ such that $\lim_{t\to\infty} W(P_t^\xi,\mu_\xi)=0$.
\item[(ii)] For any $\xi,\eta\in \C$, $\mu_\xi=\mu_\eta$. \end{enumerate}
It is easy to see that (ii) follows from (i) and Lemma \ref{L2.2}. So, we only prove (i). To this end, it suffices to show that when $t\to\infty$,
$\{P_t^\xi\}_{t\ge 0}$ is a Cauchy sequence with respect to $W$.

For any $t_2>t_1>0$, we consider the following equations
\beg{equation*}\beg{split} &\d X(t) =\{Z(X(t))+b(X_t)\}\d t +\si \d B(t),\ \ X_0=\xi, t\in [0,t_2],\\
&\d \bar X(t) =\{Z(\bar X(t))+b(\bar X_t)\}\d t +\si \d B(t),\ \
\bar X_{t_2-t_1}=\xi, t\in [t_2-t_1,t_2].\end{split}\end{equation*}
Then the distributions of $X_{t_2}$ and $\bar X_{t_2}$ are
$P_{t_2}^\xi$ and $P_{t_1}^\xi$ respectively. By \eqref{A1}, we have
$$\d |X(t)-\bar X(t)|^2 \le \big(\ll_2 \|X_t-\bar X_t\|_\infty^2-\ll_1|X(t)-\bar X(t)|^2\big)\d t,\ \ t\in [t_2-t_1,t_2].$$
As in the proof of Lemma \ref{L2.2}, this implies
$$\|X_t-\bar X_t\|_\infty^2 \le \e^{\ll_1 r_0} \|X_{t_2-t_1}-\xi\|_\infty^2 \e^{-\ll (t+t_1-t_2)},\ \ t\in [t_2-t_1,t_2].$$
In particular,
$$\|X_{t_2}-\bar X_{t_2}\|_\infty^2 \le \e^{\ll_1 r_0} \|X_{t_2-t_1}-\xi\|_\infty^2 \e^{-\ll t_1}.$$
By Lemma \ref{L2.1}, we have
$$\E \|X_{t_2-t_1}-\xi\|_\infty^2\le C:=\sup_{t\ge 0} \E \|X_{t}-\xi\|_\infty^2<\infty.$$
Then
$$W(P_{t_1}^\xi,P_{t_2}^\xi)\le \E \{1\land |X_{t_2}-\bar X_{t_2}\|_\infty^2\} \le C \e^{\ll_1 r_0 - \ll t_1}, $$ which goes to zero as $t_1\to\infty$. Therefore, when $t\to\infty$, $\{P_t^\xi\}_{t\ge 0}$ is a Chauchy sequence with respect to $W$.
\end{proof}

\beg{proof}[Proof of Theorem \ref{T1.1}] (a) We first prove that $\|P_t\|_{2\to 4}<\infty$ holds for large enough $t>0.$ Let $f\in \B_b(\C)$
with $\mu(f^2)=1$. By Lemma \ref{L2.3}, for any $t_0>r_0$ there exists a constant $c_0>0$ such that
$$(P_{t_0} f(\xi))^2\le (P_{t_0} f^2(\eta)) \e^{c_0\|\xi-\eta\|^2_\infty},\ \ \xi,\eta\in\C.$$ By the Markov property and Schwartz's inequality,
\beg{equation*}\beg{split} |P_{t+t_0} f(\xi)|^2& = |\E (P_{t_0} f)(X_t^\xi)|^2  \le \Big(\E \ss{ (P_{t_0} f^2 (X_t^\eta))\exp[c_0 \|X^\xi_t-X_t^\eta\|_\infty^2]}\Big)^2\\
&\le (\E (P_{t_0} f^2 (X_t^\eta))  \E \e^{c_0\|X^\xi_t-X_t^\eta\|_\infty^2}= (P_{t+t_0}f^2(\eta)) \E \e^{c_0\|X^\xi_t-X_t^\eta\|_\infty^2}.\end{split}\end{equation*}
Combining this with Lemma \ref{L2.2}, we obtain
$$|P_{t+t_0} f(\xi)|^2\le (P_{t+t_0}f^2(\eta)) \exp\big[c_1\e^{-\ll t}\|\xi-\eta\|_\infty^2\big].$$
Let $r>0$ such that $\mu(B_r)\ge \ff 1 2 $, where
$B_r:=\{\|\cdot\|_\infty<R\}.$ Then
\beg{equation*}\beg{split} &|P_{t+t_0} f(\xi)|^2\exp\big[ -c_1 \e^{-\ll t}( \|\xi\|_\infty +r)^2\big]\le 2 |P_{t+t_0} f(\xi)|^2 \int_{B_r} \exp\big[-c_1\e^{-\ll t}\|\xi-\eta\|_\infty^2\big]\mu(\d\eta)\\
&\le 2    \int_\C P_{t+t_0}f^2(\eta) \mu(\d \eta)
=2.\end{split}\end{equation*} Thus, \beq\label{HP} |P_{t+t_0}
f(\xi)|^4 \le \exp\big[c_2(1+\|\xi\|_\infty^2\e^{-\ll
t})\big],\ \ t\ge 0\end{equation}  holds for some constant $c_2>0.$ On the
other hand, by Lemmas \ref{L2.1} and \ref{L2.4} we have
$$\mu(N\land \e^{\vv \|\cdot\|_\infty^2}) =\lim_{t\to\infty} \E (N\land \e^{\vv \|X^0_t\|_\infty^2})\le \e^{c}<\infty,\ \ N>0$$ for some constant $c>0$. Letting $N\to\infty$ we obtain $\mu(\e^{\vv\|\cdot\|_\infty^2})<\infty$. Therefore, \eqref{HP} implies
$\|P_{t+t_0}\|_{2\to 4} <\infty$ for large enough $t>0$.

(b) To prove Theorem \ref{T1.1}(3), we let $X_t, Y_t$ and $R$ be in
the proof of Lemma \ref{L2.3}.  By \eqref{NW} and $P_{t+r_0} f(\xi)=
\E f(X_{t+r_0})$, we have
$$|P_{t+r_0}f(\xi)-P_{t+r_0}f(\eta)| \le  \E |f(X_{t+t_0})(R-1)| \le \ss{(P_{t+r_0}f^2(\xi))\E(R^2-1)}.$$
Take $p=2$ and $t=t_1>0$ such that $\|P_{t_1+r_0}\|_{2\to 4}<\infty$ according to (a). By \eqref{NW2} there exists a constant $c_1>0$ such that
$\E R^2 \le \e^{c_1\|\xi-\eta\|_\infty^2}.$ So,
\beq\label{NW0} \beg{split} |P_{t_1+r_0}f(\xi)-P_{t_1+r_0}f(\eta)|^2 &\le (P_{t_1+r_0}f^2(\xi))(\e^{c_1\|\xi-\eta\|_\infty^2}-1)\\
&\le (P_{t_1+r_0}f^2(\xi))c_1\|\xi-\eta\|_\infty^2\e^{c_1\|\xi-\eta\|_\infty^2}.\end{split}\end{equation}
Hence, for any $t>0$,
\beg{equation*}\beg{split} &|P_{t+2(t_1+r_0)}f(\xi)-P_{t+2(t_1+r_0)}f(\eta)|^2 \le \big(\E |P_{t_1+r_0}(P_{t_1+r_0}f)(X_t^\xi)-P_{t_1+r_0}(P_{t_1+r_0}f)(X_t^\eta)|\big)^2 \\
&\le \Big( \E\ss {(P_{t_1+r_0}(P_{t_1+r_0}f)^2(X_t^\xi))c_1\|X_t^\xi-X_t^\eta\|_\infty^2\e^{c_1\|X_t^\xi-X_t^\eta\|_\infty^2}}\Big)^2\\
&\le (P_{t+t_1+r_0} (P_{t_1+r_0}f)^2(\xi)) \E \Big[c_1\|X_t^\xi-X_t^\eta\|_\infty^2\e^{c_1\|X_t^\xi-X_t^\eta\|_\infty^2} \Big].\end{split}\end{equation*}
Combining this with Lemma \ref{L2.2}, we arrive at
\beg{equation*}\beg{split} & |P_{t+2(t_1+r_0)}f(\xi)-P_{t+2(t_1+r_0)}f(\eta)|^2\\
 &\le (P_{t+t_1+r_0} (P_{t_1+r_0}f)^2(\xi)) c_2\e^{-\ll t}\|\xi-\eta\|_\infty^2\exp\big[c_2\e^{-\ll t}\|\xi-\eta\|_\infty^2\big]\\
&\le (P_{t+t_1+r_0} (P_{t_1+r_0}f)^2(\xi)) c_3\e^{-\ll t} \exp\Big[\ff \vv 4\|\xi-\eta\|_\infty^2\Big]\end{split}\end{equation*}
for some constants $c_2,c_3>0$ and large enough $t>0$, where $\vv>0$ is such that $\mu(\e^{\vv \|\cdot\|_\infty^2})<\infty$ according to (a).
So,
\beg{equation*}\beg{split} & 2\mu(|P_{t+2(t_1+r_0)}f -\mu(f)|^2) = \int_{\C\times\C} |P_{t+2(t_1+r_0)}f(\xi)-P_{t+2(t_1+r_0)}f(\eta)|^2\mu(\d\xi)\mu(\d \eta)
\\
&\le c_3 \e^{-\ll t} \bigg(\int_{\C} \big\{P_{t+t_1+r_0} (P_{t_1+r_0}f)^2\big\}^2(\xi)\mu(\d\xi)\bigg)^{\ff 1 2} \bigg(\int_{\C\times\C} \exp\Big[\ff \vv 2\|\xi-\eta\|_\infty^2\Big]\mu(\d\xi)\mu(\d\eta)\bigg)^{\ff 1 2} \\
& \le C\e^{-\ll t}\mu(f^2),\ \ t\ge 0\end{split}\end{equation*} holds for some constant $C>0$, since by   Jensen's inequality and that $\mu$ is $P_t$-invariant, we have
 $$\int_{\C} \big\{P_{t+t_1+r_0} (P_{t_1+r_0}f)^2\big\}^2 \d\mu \le \int_\C (P_{t_1+r_0}f)^4\d\mu \le \|P_{t_1+r_0}\|_{2\to 4}^4 \mu(f^2)^2.$$
 Therefore, the assertion in Theorem \ref{T1.1}(3) holds for large enough $t>0$.
Since $P_t$ is contractive in $L^2(\mu)$, it holds for all $t>0$.

(c) We now come back to the proof of Theorem \ref{T1.1}(1). This assertion follows from (a) and Theorem \ref{T1.1}(3) by straightforward calculations. Let $f\in L^2(\mu)$ with $\mu(f^2)=1.$ Let $\hat f= f-\mu(f).$ We have $\mu(P_t \hat f)= \mu(\hat f)=0$. Let $t_0>r_0$ such that
$\|P_{t_0}\|_{2\to 4}<\infty$, we obtain
\beg{equation*}\beg{split}  \mu((P_{t+t_0}f)^4)  & =\mu(f)^4 + 4\mu(f) \mu((P_{t+t_0}\hat f)^3)  + 6 \mu(f)^2 \mu((P_{t+t_0}\hat f)^2) +\mu((P_{t+t_0}\hat f)^4)\\
&\le \mu(f)^4 + 4|\mu(f)|\cdot \|P_{t_0}\|_{2\to 3}^3 \big\{\mu((P_t\hat f)^2)\big\}^{\ff 3 2} \\
&\quad + 6 \mu(f)^2 \mu((P_{t+t_0}\hat f)^2)
+\|P_{t_0}\|_{2\to 4}^4 \mu((P_t\hat f)^2)^2 \\
&\le \mu(f)^4 + c \e^{-\ll t}\big\{|\mu(f)|\big(\mu({\hat f}^2)\big)^{\ff 3 2}  +\mu(f)^2 \mu({\hat f}^2)+ \big(\mu({\hat f}^2)\big)^2\big\}
\end{split}\end{equation*} for some constant $c>0$ according to Theorem \ref{T1.1}(3). Since
$$|\mu(f)| \big(\mu({\hat f}^2)\big)^{\ff 3 2} \le \mu(f)^2 \mu({\hat f}^2) + \big(\mu({\hat f}^2)\big)^2,$$
this implies that for large $t>0$,
$$\mu((P_{t+t_0}f)^4)\le \mu(f)^4 + 2 \mu(f)^2  \mu({\hat f}^2) +  \big(\mu({\hat f}^2)\big)^2=\big\{\mu(f)^2+   \mu({\hat f}^2)\}^2=\mu(f^2)=1.$$

(d)  By e.g. Proposition 3.1(2) in \cite{WY}, the Harnack inequality implies that $P_t$ has a  density with respect to $\mu$ for $t>r_0$. Thus, according to \cite[Theorem 2.3]{Wu}, the hyperboundedness of $P_t$  proved in (a) implies that $P_t$ is compact in $L^2(\mu)$ for large enough $t>0$. Moreover, according to \cite[Proposition 2.3]{W14N}, the hypercontractivity  
implies the desired exponential convergence of $P_t$ in entropy. Hence, Theorem \ref{T1.1}(2) is proved.

(e) Finally, we prove Theorem \ref{T1.1}(4). By the first inequality
in \eqref{NW0}, we have
$$\|\nn_\eta P_{t_1+r_0}f\|^2 (\xi):= \limsup_{s\to 0} \ff {|P_{t_1+r_0} f(\xi+s\eta)- P_{t_1+r_0} f(\xi)|^2}{s^2} \le c_1 \|\eta\|^2_\infty P_{t_1+r_0} f^2(\xi). $$
Thus,
$|P_{t_1+r_0} f(\xi)- P_{t_1+r_0} f(\eta)|^2\le c_1 \|f\|_\infty^2\|\xi-\eta\|_\infty^2.$ Combining this with Lemma \ref{L2.2} and using the Markov property, we obtain
$$|P_{t+t_1+r_0} f(\xi)- P_{t+t_1+r_0} f(\eta)|^2 \le c_1 \|f\|_\infty^2 \E \|X_t^\xi-X_t^\eta\|_\infty^2\le c_2 \e^{-\ll t}\|f\|_\infty^2 $$
for some constants $c_2>0.$ This completes the proof.
\end{proof}

\beg{proof}[Proof of Corollary \ref{C1.2}.] By \eqref{C01} and \eqref{C02}, for any $s>0$ we have
\beg{equation*}\beg{split} & 2\<Z(\xi(0))- Z(\eta(0)) + b(\xi)-b(\eta), \xi(0)-\eta(0)\> \\
&\le -2 k_1 |\xi(0)-\eta(0)|^2 + 2 k_2 \|\xi-\eta\|_\infty\cdot |\xi(0)-\eta(0)| \\
&\le -(2k_1-s)|\xi(0)-\eta(0)|^2 + \ff{k_2^2}s \|\xi-\eta\|_\infty^2.\end{split}\end{equation*}
Let $\ll_1(s)= 2k_1-s, \ll_2(s)= \ff{k_2^2}s.$ Then Theorem \ref{T1.1} applies if  there exists $s\in (0, 2k_1]$ such that
$$\ll_2(s) < \ll_1(s) \e^{-r_0 \ll_1(s)} = (2k_1-s)\e^{-r_0(2k_1-s)};$$ that is,
\beq\label{C04}    k_2^2  < \sup_{s\in (0, 2k_1)}   (2k_1s-s^2)    \e^{-r_0(2k_1-s)},\end{equation}
where the sup is reached at
$$s_0= \ff{k_1 r_0 +\ss{k_1^2r_0^2+1}-1}{r_0},$$ such that \eqref{C04} coincides with \eqref{C03} and Theorem \ref{T1.1} applies to
\beg{equation*}\beg{split} \ll &:= \ll_1(s_0)- \ll_2 (s_0)\e^{r_0\ll_1(s_0)}\\
 &= \ff{r_0}{k_1r_0-1+\ss{k_1^2r_0^2+1}}\bigg(\ff{2(\ss{k_1^2r_0^2+1}-1)}{r_0^2} -k_2^2 \exp\Big[1+k_1r_0-\ss{k_1^2r_0^2+1}\Big]\bigg).\end{split}\end{equation*}
 \end{proof}

\section{Proof of Theorem \ref{T1.3}}

We first recall  the following     Fernique inequality \cite{F} (see also \cite{B}).

 \beg{lem}[Fernique Inequality] \label{L3.1} Let $(X(t))_{t\in D}$ be a family of centered Gaussian random variables on $\R^d$  with
$$  \sup_{t\in D}\E |X(t)|^2\le \si <\infty $$ for some constant $\si>0,$  where $D=\prod_{1\le i\le N} [a_i,b_i]$ is a cube in $\R^N$. Let $\phi\in C([0,\infty])$ be non-decreasing such that $\int_0^\infty \phi(\e^{-r^2})\d r<\infty$ and
$$\E |X(t)-X(s)|^2\le \phi(|t-s|),\ \ s,t\in D.$$ Then there exist constants $C_1,C_2>0$ depending only on $(b_i-a_i)_{1\le i\le N}, N, d, \phi$ and $\si$ such that
$$\P\bigg(\sup_{t\in D} |X(t)|\ge r\bigg)\le C_1 \e^{-C_2 r^2},\ \ r\ge 1.$$\end{lem}

\begin{proof}[Proof of Theorem \ref{T1.3}]\label{sec3} Let $P_t$ be the Markov semigroup associated to the equation $\eqref{3.1}$ such that $\nu$ satisfies $\ll_0<0$ and $b$ satisfies $\eqref{C02}$.
According to the proof of  Lemma \ref{L2.4}, it is easy to see that \eqref{a7} and \eqref{b1}  for some $k\in (0,-\ll_0)$
imply that $P_t$ has  a unique invariant probability measure
$\mu$. Moreover,  by taking $Z=0$ and combining  the linear drift with $b$, we see that Lemma
\ref{L2.3} applies to the present equation for $k_1=0$ and some constant $k_2>0.$  Thus, following the line in the proof of Theorem \ref{T1.1}, we only need to show that   Lemma \ref{L2.1} and Lemma
\ref{L2.2} apply to the equation $\eqref{3.1}$ as well.

\smallskip

 Let $k\in(0, -\ll_0)$
such that
\begin{equation}\label{c1}
\ll:= k-c_kk_2\e^{kr_0}>0.
\end{equation}
It follows from \eqref{C02}, \eqref{a7}  and \eqref{b1} that
\begin{equation*}
\begin{split}
 |X^\xi(t)-X^\eta(t)|&\le\|\GG(t)\|\cdot |\xi(0)-\eta(0)|+\int_{-r_0}^0\Big\|\int_{-r_0}^\theta
\GG(t+\theta-s)\nu(\d \theta)\Big\|\cdot|\xi(s)-\eta(s)|\d s\\
 &\quad+\int_0^t\|\GG(t-s)\|\cdot |b(X_s^\xi)-b(X_s^\eta)|\d s\\
 &\le C_1\e^{-kt}\|\xi-\eta\|_\8+c_kk_2\int_0^t\e^{-k(t-s)} \|X_s^\xi-X_s^\eta\|_\8\d s
 \end{split}
\end{equation*}
for some constant $C_1\ge 1$. Then
\begin{equation*}
\begin{split}
\e^{kt}\|X^\xi_t-X^\eta_t\|_\8&\le \e^{kr_0}\sup_{t-r_0\le s\le
t}(\e^{k s}|X^\xi(s)-X^\eta(s)|)\\
&\le C_1\e^{kr_0}\|\xi-\eta\|_\8+c_kk_2\e^{kr_0}\int_0^t\e^{ks}
\|X_s^\xi-X_s^\eta\|_\8\d s.
\end{split}
\end{equation*}
This, together with   Gronwall's inequality, gives that
\begin{equation}\label{c2}
\begin{split}
\|X^\xi_t-X^\eta_t\|_\8
&\le C_1\e^{kr_0}\|\xi-\eta\|_\8\e^{-\ll t}.
\end{split}
\end{equation}
So, Lemma   \ref{L2.2} applies.

\smallskip

  Next, by
\eqref{C02}, \eqref{a7}  and \eqref{b1},
\begin{equation*}
\begin{split}
\e^{kt}\|X^\xi_t\|_\8 &\le C_2(\|\xi\|_\8+ \e^{kt})
 +c_k\e^{kr_0}k_2\int_0^t\e^{ks}\|X_s^\xi\|_\8\d
 s\\
 &\quad+ \e^{kr_0}\sup_{(t-r_0)^+\le s\le t}\Big(\e^{ks}\Big|\int_0^s\GG(s-r)\si\d
 B(r)\Big|\Big)
 \end{split}
\end{equation*}
holds for some constant  $C_2>0.$ By Gronwall's inequality, this implies
 \begin{equation*}
\begin{split}
\|X^\xi_t\|_\8 &\le
 C_2(\|\xi\|_\8+1) + \e^{kr_0}\sup_{t-r_0\le s\le t} \Big|\int_0^s\GG(s-r)\si\d
 B(r)\Big| \\
 &\quad+ C_2 (\|\xi\|_\infty+1)  \int_0^t\e^{-\ll (t-s)}\d s
 \\
 &\quad+ \e^{kr_0} \int_0^t\bigg(\sup_{(s-r_0)^+\le u\le s} \Big|\int_0^u\GG(u-r)\si\d
 B(r)\Big|\bigg)\e^{-\ll(t-s)}\d s\\
 &\le C_3 (1+\|\xi\|_\infty^2) + C_3  \int_0^t \e^{-\ll(t-s)} \sup_{u\in [-r_0,0]}|Z_{s,u}|\d s
 \end{split}
\end{equation*}  for some constant $C_3>0,$  where
$$Z_{s,u}:= \int_0^{(s+u)^+} \GG(s+u-r)\si\d B(r),\ \ s\ge 0, u\in [-r_0,0].$$ Then, by Jensen's inequality for the probability measure $\ff{\ll}{1-\e^{-\ll t}}\e^{-\ll (t-s)}\d s$ on $[0,t]$, there exists a constant $C_4>0$ such that
\begin{equation}\label{BB}
\begin{split}
\E\e^{\vv\|X^\xi_t\|_\8^2} &\le \e^{\vv C_4(1+\|\xi\|_\8^2)}\E\exp\bigg[\vv
C_4\bigg(\int_0^t \e^{-\ll(t-s)} \sup_{u\in [-r_0,0]}|Z_{s,u}|\d s\bigg)^2\bigg]\\
&\le \e^{\vv C_4(1+\|\xi\|_\8^2)}\ff{\ll}{1-\e^{-\ll t}}\int_0^t \e^{-\ll(t-s)} \bigg(\E \exp\Big[\ff{C_4\vv }\ll \sup_{u\in [-r_0,0]}|Z_{s,u}|^2\Big]\bigg)\d s,
   \ \  \vv>0.
 \end{split}
\end{equation}  It is easy to see from  \eqref{GG} and \eqref{b1}  that
$$\si:= \sup_{s\ge 0, u\in [-r_0,0]} \E |Z_{s,u}|^2<\infty,$$ and there exist constants $c_1,c_2,c_3>0$ such that
\beg{equation*}\beg{split} \E |Z_{s,u}-Z_{s,v}|^2& \le 2 \E\bigg|\int_{(s+v)^+}^{(s+u)^+} \GG(s+u-r)\si\d B(r)\bigg|^2\\
&\quad  + 2 \E
\bigg|\int_0^{(s+v)^+} \big(\GG(s+u-r)-\GG(s+v-r)\big)\si\d B(r)\bigg|^2\\
&\le c_1 |u-v| + 2 \|\si\|^2 \int_0^{(s+v)^+} \|\GG(s+u-r)-\GG(s+v-r)\|^2\d r\\
&\le c_1|u-v|+c_2|u-v|^2 \le c_3 |u-v|, \ \ s\ge 0, -r_0\le v\le u\le 0.\end{split}\end{equation*}   Thus,
by Lemma \ref{L3.1} with $N=1, D= [-r_0,0]$ and $\phi(r)=cr$,
$$C(\vv):= \sup_{s\ge 0} \E \exp\Big[\ff{C_4\vv }\ll \sup_{u\in [-r_0,0]}|Z_{s,u}|^2\Big]<\infty$$  holds for small enough $\vv>0.$ Therefore,
  \eqref{BB} implies the assertion in  Lemma \ref{L2.1}.
\end{proof}

\section{Appendix}

For application of Theorem \ref{T1.3}, we aim to estimate the constant $c_k$ in \eqref{b1}.
Write $\nu=(\nu_{ij})_{1\le i,j\le d}$ for finite signed measures $\nu_{ij}$ on $[-r_0,0].$ Let $|\nu_{ij}|$ be the total variation of $\nu_{ij}$.
For any $\ll>\ll_0$, define
\beg{equation*}\beg{split} &\|\nu\|= \sup_{1\le i\le d} \ss{\sum_{1\le j\le d}|\nu_{ij}|([-r_0,0])^2},\ \
 T_\ll = 2\e^{\ll^- r_0}\|\nu\|,\ \ \ll^-=(-\ll)\lor 0,\\
 &\rr_\ll= \max_{\theta\in [-T_\ll,T_\ll]} \bigg\|\bigg((\ll+\i\theta)I_{d\times d} -\int_{-r_0}^0 \e^{\ll+\i\theta}\nu(\d s)\bigg)^{-1}-(\ll+\i\theta -\ll_0)^{-1}I_{d\times d}\bigg\|.\end{split}\end{equation*}

\begin{prp}\label{PP}
{\rm For any $\ll>\ll_0$,
\begin{equation*}
\|\GG(t)\|\le
\Big\{\ff{(\ll-\ll_0+1)\pi}{\ll-\ll_0} +\ff{4(|\ll_0|+\e^{\ll^- r_0}\|\nu\|)}{T_\ll}+2\rr_\ll
T_\ll\Big\}\e^{\ll t},\ \ t\ge 0.
\end{equation*}
 }
\end{prp}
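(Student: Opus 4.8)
The plan is to represent $\GG$ as the inverse Laplace transform of the inverse characteristic matrix and then to estimate the resulting contour integral. First I would compute the one-sided Laplace transform of the fundamental solution. Set $\Delta(z):=zI_{d\times d}-\int_{-r_0}^0\e^{zs}\nu(\d s)$. Transforming \eqref{GG} and using $\GG(0)=I_{d\times d}$, $\GG(\theta)=0$ on $[-r_0,0)$ (so that the delayed term reproduces $\hat\GG$ up to the factor $\int_{-r_0}^0\e^{zs}\nu(\d s)$) together with Fubini's theorem, one obtains $z\hat\GG(z)-I_{d\times d}=\big(\int_{-r_0}^0\e^{zs}\nu(\d s)\big)\hat\GG(z)$, whence
\[
\hat\GG(z):=\int_0^\infty\e^{-zt}\GG(t)\,\d t=\Delta(z)^{-1},\qquad {\rm Re}\,z>\ll_0 .
\]
The exponential decay \eqref{b1}, available from \cite{HV}, guarantees that this transform converges absolutely and that $\Delta(z)$ is invertible on $\{{\rm Re}\,z>\ll_0\}$, the zeros of $\det\Delta$ being precisely the points entering the definition of $\ll_0$. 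Hence for every $\ll>\ll_0$ the vertical line $\{{\rm Re}\,z=\ll\}$ lies to the right of all these zeros, and the Bromwich inversion formula reads
\[
\GG(t)=\frac{1}{2\pi\i}\int_{{\rm Re}\,z=\ll}\e^{zt}\Delta(z)^{-1}\,\d z=\frac{\e^{\ll t}}{2\pi}\int_{-\infty}^{\infty}\e^{\i\theta t}\,\Delta(\ll+\i\theta)^{-1}\,\d\theta .
\]

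Since $\Delta(\ll+\i\theta)^{-1}=(\ll+\i\theta)^{-1}I_{d\times d}+O(|\theta|^{-2})$ as $|\theta|\to\infty$, this integral is only conditionally convergent and a naive absolute-value estimate is unavailable. The decisive device is to subtract the scalar resolvent $(\ll+\i\theta-\ll_0)^{-1}I_{d\times d}$, which is the exact Laplace transform of $\e^{\ll_0 t}I_{d\times d}$, and to estimate only the remainder
\[
R(\theta):=\Delta(\ll+\i\theta)^{-1}-(\ll+\i\theta-\ll_0)^{-1}I_{d\times d}=\frac{\Delta(\ll+\i\theta)^{-1}}{\ll+\i\theta-\ll_0}\Big(\int_{-r_0}^0\e^{(\ll+\i\theta)s}\nu(\d s)-\ll_0 I_{d\times d}\Big).
\]
As the bracketed matrix is bounded while each scalar prefactor decays like $|\theta|^{-1}$, one gets $R(\theta)=O(|\theta|^{-2})$, so that $\int_{-\infty}^\infty\|R(\theta)\|\,\d\theta<\infty$ and the contribution of $R$ to $\GG(t)$ may be bounded in absolute value.

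It then remains to split $\int_{\R}\|R\|$ at $|\theta|=T_\ll$. On the near field $|\theta|\le T_\ll$ I would bound $\|R(\theta)\|$ by its maximum $\rr_\ll$ over that interval — exactly the quantity defined just before the Proposition — contributing a term of size $2\rr_\ll T_\ll$. On the far field $|\theta|>T_\ll$ I would exploit $T_\ll=2\e^{\ll^- r_0}\|\nu\|\ge 2\big\|\int_{-r_0}^0\e^{(\ll+\i\theta)s}\nu(\d s)\big\|$ together with the elementary resolvent inequality $\|(zI_{d\times d}-M)^{-1}\|\le(|z|-\|M\|)^{-1}$ and the lower bounds $|z|\ge|\theta|$, $|z-\ll_0|\ge|\theta|$ for $z=\ll+\i\theta$; this yields $\|R(\theta)\|\le(|\ll_0|+\e^{\ll^- r_0}\|\nu\|)\big(|\theta|(|\theta|-T_\ll/2)\big)^{-1}$, whose integral over $|\theta|>T_\ll$ is of order $4(|\ll_0|+\e^{\ll^- r_0}\|\nu\|)/T_\ll$. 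Finally, the scalar principal part contributes the first summand $\frac{(\ll-\ll_0+1)\pi}{\ll-\ll_0}$; here the factor $\pi/(\ll-\ll_0)$ is the Cauchy integral $\int_{\R}\big((\ll-\ll_0)^2+\theta^2\big)^{-1}\d\theta$, and one may alternatively note that this part equals the explicit matrix $\e^{\ll_0 t}I_{d\times d}$. Adding the three pieces and factoring out $\e^{\ll t}$ produces the asserted bound.

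The main obstacle is precisely the conditional, non-absolute convergence of the Bromwich integral created by the $1/z$ tail of $\Delta(z)^{-1}$: it forbids the naive estimate $\|\GG(t)\|\le\frac{\e^{\ll t}}{2\pi}\int_{\R}\|\Delta(\ll+\i\theta)^{-1}\|\,\d\theta$. Subtracting the scalar resolvent $(\ll+\i\theta-\ll_0)^{-1}I_{d\times d}$ — the component of $\Delta^{-1}$ responsible for the slow decay, and the reason $\ll_0$ enters the definition of $\rr_\ll$ — removes this difficulty, after which every remaining integral is absolutely convergent and reduces to elementary one-dimensional estimates. The remaining technical points, namely the validity of the inversion formula and the identification of $\e^{\ll_0 t}I_{d\times d}$ as the scalar inverse transform, both follow once the exponential decay \eqref{b1} is invoked.
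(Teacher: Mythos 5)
Your proposal follows essentially the same route as the paper: Bromwich inversion of $Q_z^{-1}=\Delta(z)^{-1}$, subtraction of the scalar resolvent $(\ll+\i\theta-\ll_0)^{-1}I_{d\times d}$ to restore absolute convergence (with the same identity $G_z=Q_z^{-1}(\int_{-r_0}^0\e^{zs}\nu(\d s)-\ll_0I_{d\times d})/(z-\ll_0)$ driving the $O(|\theta|^{-2})$ decay), the split at $|\theta|=T_\ll$ yielding the terms $2\rr_\ll T_\ll$ and $4(|\ll_0|+\e^{\ll^- r_0}\|\nu\|)/T_\ll$, and the Cauchy-integral/residue treatment of the scalar part giving $(\ll-\ll_0+1)\pi/(\ll-\ll_0)$. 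The argument is correct and matches the paper's proof in all essentials.
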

\begin{proof} For any $z\ne \ll_0$, define
$$Q_z= z I_{d\times d} -\int_{r_0}^0 \e^{z s} \nu(\d s),\ \ G_z= Q_z^{-1} -\ff 1{ z-\ll_0} I_{d\times d}.$$
 We have (see \cite[Theorem 1.5.1]{HV})
\begin{equation}\label{AA0}
\GG(t)=\lim_{T\to\8}\int_{- T}^{
T}Q_{\ll+i\theta}^{-1} \e^{t(\ll+\i \theta)}\d\theta= \lim_{T\to\8}\int_{- T}^{
T}\Big(G_{\ll+i\theta} +\ff{I_{d\times d}}{\ll-\ll_0+\i\theta}\Big) \e^{t(\ll+\i \theta)}\d\theta,\ \ \ll>\ll_0.
\end{equation}
 Obviously, $\big\|\int_{-r_0}^0 \e^{(\ll +\i T)s}\nu(\d s)\big\|\le \e^{r_0 \ll^- }\|v\|$ and
$$\ss{1+\ll^2 T^{-2}}-\ff{\e^{ \ll^-r_0}\|\nu\|}{|T|}\ge \ff 1 2,\ \ |T|\ge
T_\ll.$$
Then
\begin{equation*}
\begin{split}
\|Q_{\ll+\i T}^{-1}\|&\le\ff{1}{\ss{\ll^2+T^2}-\e^{|\ll|r_0}\|\nu\|}\le\ff{2}{|T|},\ \ |T|\ge T_\ll.
\end{split}
\end{equation*}
This yields
\begin{equation*}
\begin{split}
\|G_{\ll+\i T}\| &\le\|Q_{\ll+\i T}^{-1}\|\cdot\bigg\|\ff{\int_{-r_0}^0
\e^{(\ll+\i T) s}\nu(\d
s)-\ll_0I_{d\times d}}{\ll+\i T  -\ll_0}\bigg\|\\
&\le \ff{2(|\ll_0|+\e^{ \ll^-r_0}\|\nu\|)}{|T|\ss{(\ll-\ll_0)^2+T^2}}\le\ff{2(|\ll_0|+\e^{\ll^-r_0}\|\nu\|)}{T^2},~~~|T|\ge
T_\ll.
\end{split}
\end{equation*}
Thus, for any $T\ge T_\ll$,
\begin{equation}\label{*4}
\begin{split}
\int_{T}^{T}\|G_{\ll+\i \theta}\e^{t(\ll+\i \theta)}\|\d\theta&=\int_{-T_\ll}^{T_\ll}\|G_{\ll+\i\theta}\e^{t(\ll+\i \theta)}\|\d\theta
+\int_{|\theta|>T_\ll}\|G_{\ll+\i\theta}\e^{t(\ll+\i \theta)}\|\d\theta\\
&\le 2 \rr_\ll T_\ll \e^{\ll t} + \ff{4(|\ll_0|+\e^{\ll^- r_0}\|\nu\|)\e^{\ll t}}{T_\ll}.
\end{split}
\end{equation}
On the other hand,
\begin{equation*}
\begin{split}
\lim_{T\to\8}\int_{-T}^T
\ff{\e^{t(\ll+\i \theta)}}{\ll-\ll_0+\i\theta}\,\d\theta &=i\e^{\ll
t}\lim_{T\to\8}\int_{-
T}^{T}\ff{(\ll-\ll_0)\e^{it\theta}}{(\ll-\ll_0)^2+\theta^2}\d\theta-\e^{\ll
t}\lim_{T\to\8}\int_{-
T}^{T}\ff{\theta\e^{it\theta}}{(\ll-\ll_0)^2+\theta^2}\d\theta\\
&=:\Theta_1+\Theta_2.
\end{split}
\end{equation*}
It is easy to see that
\begin{equation*}
\|\Theta_1\|\le\ff{2\e^{\ll
t}}{\ll-\ll_0}\lim_{T\to\8}\arctan\Big(\ff{\theta}{\ll-\ll_0}\Big)\Big|_0^T=\ff{\pi\e^{\ll
t}}{\ll-\ll_0}.
\end{equation*}
Moreover, by the  residue theorem,
\begin{equation*}
\begin{split}
\|\Theta_2\|&=\Big|-2\pi\e^{\ll t}
i\mbox{Res}\Big[\ff{z\e^{itz}}{(\ll-\ll_0)^2+z^2},(\ll-\ll_0)i\Big]\Big|\\
&=\Big|-2\pi\e^{\ll t}
i\lim_{z\to(\ll-\ll_0)i}(z-(\ll-\ll_0)i)\times\ff{z\e^{itz}}{(\ll-\ll_0)^2+z^2}\Big|\\
&=\Big|-2\pi\e^{\ll t}
i\lim_{z\to(\ll-\ll_0)i}\ff{z\e^{itz}}{2(\ll-\ll_0)i}\Big|\\
&=\Big|-2\pi\e^{\ll t}
i\ff{(\ll-\ll_0)i\e^{-t(\ll-\ll_0)}}{2(\ll-\ll_0)i}\Big|\\
 &=\pi\e^{\ll_0t}
 \le\pi\e^{\ll t}.
\end{split}
\end{equation*}
Hence, we arrive at
\begin{equation}\label{*5}
\bigg|\lim_{T\to\8}\int_{  -T}^T
 \ff{\e^{t(\ll+\i \theta)}}{\ll-\ll_0+\i \theta }\d\theta\bigg|\le\ff{(\ll-\ll_0+1)\pi\e^{\ll t}}{\ll-\ll_0}.
 \end{equation}
Combing this with  \eqref{*4} and \eqref{AA0}, we finish the proof.

\end{proof}

\end{document}